\newcommand{\Ker}{\operatorname{Ker}}
\newcommand{\ev}{\operatorname{ev}}
   \theoremstyle{plain}
   \newtheorem{thm}{Theorem}
   \newtheorem{lem}[thm]{Lemma}
   \newtheorem{cor}[thm]{Corollary}
   \theoremstyle{definition}
   \newtheorem{defn}[thm]{Definition}
   \theoremstyle{remark}
\author{V. Manuilov}
\date{}
\address{Moscow State University,
Leninskie Gory 1, Moscow, 
119991, Russia}
\email{manuilov@mech.math.msu.su}
\title{A more symmetric picture for Kasparov's $KK$-bifunctor}
\begin{document}

\maketitle

\begin{abstract}
For $C^*$-algebras $A$ and $B$, we generalize the notion of a quasihomomorphism from $A$ to $B$, due to Cuntz, by considering 
quasihomomorphisms from some $C^*$-algebra $C$ to $B$ such that $C$ surjects onto $A$, and the two maps forming a quasihomomorphism agree on the kernel of this surjection.
Under an additional assumption, the group of homotopy classes of such generalized quasihomomorphisms coincides with $KK(A,B)$. This makes the definition of Kasparov's bifunctor slightly more symmetric and gives more flexibility for constructing elements of $KK$-groups. These generalized quasihomomorphisms can be viewed as pairs of maps directly from $A$ (instead of various $C$'s), but these maps need not be $*$-homomorphisms.

\end{abstract}

\section*{Introduction}

Effectiveness of Kasparov's $KK$-bifunctor is caused by the fact that it unifies covariant and contravariant $K$-theory and generalizes morphisms of $C^*$-algebras. For $C^*$-algebras $A$ and $B$, any $*$-homomorphism $\varphi:A\to B$ gives rise to an element of $KK(A,B)$, but there may be too few $*$-homomorphisms to make a computable bifunctor. On this way, the notion of quasihomomorphism was coined by J. Cuntz \cite{Cuntz-qA}. Let 
$$
\begin{xymatrix}{
0\ar[r]& B\ar[r]& E\ar[r]^-{q}& Q\ar[r] &0
}\end{xymatrix}
$$ 
be a short exact sequence of $C^*$-algebras such that $B$ is an essential ideal in $E$. Then a quasihomomorphism from $A$ to $B$ is a pair of $*$-homomorphisms $\varphi_+,\varphi_-:A\to E$ such that $q\circ\varphi_+=q\circ\varphi_-$, and $KK(A,B)$ is the group of homotopy classes of such quasihomomorphisms when $B$ is stable. This notion works perfectly, in particular, it helps to simplify the Kasparov product, which is the ``composition'' of quasihomomorphisms, but aesthetically it lacks symmetry: when we generalize the notion of $*$-homomorphism, we replace the target ($B$), but don't change the sourse ($A$). Our aim is to diminish this imbalance. We replace not only $B$, but $A$ as well. Let 
$$
\begin{xymatrix}{
0\ar[r]& J\ar[r]^-{\iota}& C\ar[r]^-{p}& A\ar[r] &0
}\end{xymatrix}
$$ 
be a short exact sequence of $C^*$-algebras. We want to construct $KK(A,B)$ from pairs of maps $\varphi_+,\varphi_-:C\to E$ such that
\begin{enumerate}
\item
$q\circ\varphi_+=q\circ\varphi_-$;
\item
$\varphi_+\circ\iota=\varphi_-\circ\iota$.
\end{enumerate}

The second condition is here symmetric to the first one. We call maps satisfying (2) pseudohomomorphisms. An advantage here is that pseudohomomorphisms can be considered as (pairs of) maps directly from $A$ to $E$ (as opposed to maps from $C$ to $E$), although these maps need not to be $*$-homomorphisms. 

Besides the aesthetics, this generalization gives more flexibility and thus may allow to find more elements of the $KK$-groups from geometric constructions. 

In order to obtain $KK(A,B)$ we need to impose an additional requirement: the map $\varphi_+\circ\iota=\varphi_-\circ\iota$ should be continuous with respect to the strict topologies on $J$ and $E$. If we exclude this requirement then we get a (possibly) different bifunctor $KM(A,B)$, which contains $KK(A,B)$ as a direct summand.


\section{Some trivialities on amalgamated free products}

Let $A\ast A$ be the free product of two copies of $A$, and let $qA\subset A\ast A$ be the kernel of the canonical map $m_A:A\ast A\to A$ given by multiplication. 

Let $p:C\to A$ be a surjective $*$-homomorphism, $J=\Ker p$, and let $C\ast_J C$ be the amalgamated free product of two copies of $C$ over $J$. The canonical surjection $m_C:C\ast C\to C$ factorizes through $m:C\ast_J C\to C$. The map $p\ast p:C\ast C\to A\ast A$ also factorizes through $\bar p:C\ast_J C\to A\ast A$. Restricted to $\Ker m$, the map $\bar p$ gives a $*$-homomorphism $p':\Ker m\to qA$.

\begin{lem}\label{qA=}
The $*$-homomorphism $p':\Ker m\to qA$ is an isomorphism for any surjection $p:C\to A$.

\end{lem}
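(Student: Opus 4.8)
The plan is to pin down $\Ker\bar p$ and $\Ker p'=\Ker m\cap\Ker\bar p$ by hand rather than by a formal diagram chase. Let $\iota_1,\iota_2\colon C\to C\ast_J C$ be the two canonical $*$-homomorphisms; they satisfy $m\circ\iota_1=m\circ\iota_2=\id_C$ (so each $\iota_\nu$ is isometric), $\iota_1|_J=\iota_2|_J$, and $\bar p\circ\iota_\nu=u_\nu\circ p$, where $u_1,u_2\colon A\to A\ast A$ are the canonical maps; set $q(a)=u_1(a)-u_2(a)\in qA$. First, $p'$ is well defined, since $m_A(\bar p(x))=p(m(x))=0$ for $x\in\Ker m$, i.e.\ $\bar p(x)\in\Ker m_A=qA$. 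I would then treat surjectivity and injectivity of $p'$ separately.

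For surjectivity, observe that $qA$ equals the \emph{closed ideal} of $A\ast A$ generated by $\{q(a):a\in A\}$: by the universal property of the free product, killing these elements identifies the two canonical copies of $A$, so $A\ast A/\langle q(A)\rangle\cong A$ via $m_A$, whence $\langle q(A)\rangle=\Ker m_A=qA$. Now $\bar p$ is surjective (its image contains the two canonical copies of $A$, which generate $A\ast A$), so $\im p'=\bar p(\Ker m)$ is a closed ideal of $A\ast A$, being the image of a closed ideal under a surjective $*$-homomorphism. For $a\in A$ choose $c\in C$ with $p(c)=a$; then $\iota_1(c)-\iota_2(c)\in\Ker m$ and $p'(\iota_1(c)-\iota_2(c))=q(a)$. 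Hence $\im p'$ is a closed ideal containing every $q(a)$, so $\im p'\supseteq qA$, and since $\im p'\subseteq qA$ we get $\im p'=qA$.

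The main point is the injectivity, and the key claim is that \emph{$\iota_1(J)$ is already a closed two-sided ideal of $C\ast_J C$}. It is closed since $\iota_1|_J$ is isometric. For $j\in J$ and a word $w=\iota_{\varepsilon_1}(c_1)\cdots\iota_{\varepsilon_n}(c_n)$, using $\iota_1(j)=\iota_2(j)$ and $jc_1\cdots c_k\in J$ repeatedly, one finds $\iota_1(j)\,w=\iota_1(jc_1\cdots c_n)\in\iota_1(J)$; by linearity, density of the linear span of words, continuity of multiplication, and $J=J^*$, both $\iota_1(J)\cdot(C\ast_J C)$ and $(C\ast_J C)\cdot\iota_1(J)$ lie in $\iota_1(J)$. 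Consequently the quotient $C\ast_J C/\iota_1(J)$ makes sense; killing $\iota_1(J)=\iota_2(J)$ turns each copy of $C$ into $C/J=A$ and removes the amalgamation constraint, so comparing universal properties gives $C\ast_J C/\iota_1(J)\cong A\ast A$ with quotient map $\bar p$. Hence $\Ker\bar p=\iota_1(J)$.

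Finally, $m$ carries $\iota_1(J)$ isomorphically onto $J$ by $\iota_1(j)\mapsto j$, so $\Ker m\cap\iota_1(J)=0$, and therefore $\Ker p'=\Ker m\cap\Ker\bar p=0$; together with the surjectivity above, $p'$ is an isomorphism. (One then checks that $(\bar p,m)$ even identifies $C\ast_J C$ with the pullback $(A\ast A)\times_A C$.) I expect the only genuinely non-formal step to be showing $\iota_1(J)$ is an ideal and deducing the identification of $\Ker\bar p$: the two split extensions $0\to\Ker m\to C\ast_J C\xrightarrow{m}C\to0$ and $0\to qA\to A\ast A\xrightarrow{m_A}A\to0$ map to one another compatibly with their splittings, but the snake lemma applied to them only yields surjectivity of $p'$ together with an exact sequence $0\to\Ker p'\to\Ker\bar p\to J\to0$, which does not force $\Ker p'=0$.
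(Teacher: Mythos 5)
Your proof is correct and follows essentially the same route as the paper: the key step in both is showing that $\iota^+_C(J)$ is a closed ideal of $C\ast_J C$ (by the same word computation using $\iota^+_C(j)=\iota^-_C(j)$), identifying $C\ast_J C/\iota^+_C(J)\cong A\ast A$ via universal properties so that $\Ker\bar p=\iota^+_C(J)$ maps isomorphically onto $J$ under $m$. The only difference is cosmetic: the paper then invokes the Snake Lemma (which does suffice once $m|_{\Ker\bar p}:\Ker\bar p\to J$ is known to be an isomorphism, contrary to your closing remark), while you verify injectivity and surjectivity of $p'$ directly.
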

\begin{proof}
Consider the commuting diagram
\begin{equation}\label{diagram1}
\begin{xymatrix}{
0\ar[r]&\Ker m\ar[r]\ar[d]^-{p'}&C\ast_J C\ar[r]^-{m}\ar[d]^-{\bar p}&C\ar[r]\ar[d]^-{p}&0\\
0\ar[r]&qA\ar[r]&A\ast A\ar[r]^-{m_A}&A\ar[r]&0
}\end{xymatrix}
\end{equation}
with exact lines, where $\bar p$ and $p$ are surjective. Our aim is to show that $\Ker\bar p\cong J$ and that $m|_{\Ker\bar p}:\Ker\bar p\to J$ is an isomorphism. Then the Snake Lemma finishes the job.

Let $\iota^+_C,\iota^-_C:C\to C\ast C$ be the canonical $*$-homomorphisms to the first and the second copy of $C$. Slightly abusing the notation, we use the same $\iota^\pm_C$ to denote also the canonical $*$-homomorphisms from $C$ to the first and the second copy of $C$ in $C\ast_J C$. Define $i:J\to C\ast_J C$ by $i(j)=\iota^+_C(j)$, $j\in J\subset C$. Then $i(J)$ is an ideal in $C\ast_J C$. Indeed, recall that $C\ast_J C$ is the quotient of $C\ast C$ by the ideal generated by $\iota^+_C(j)-\iota^-_C(j)$, $j\in J$. As $C\ast_J C$ is generated, as a $C^*$-algebra, by $\iota^+_C(c)$ and $\iota^-_C(c)$, $c\in C$, so it suffices to check that $i(j)\iota^+_C(c)$ and $i(j)\iota^-_C(c)$ lie in $i(J)$ for any $c\in C$, and we obviously have $i(j)\iota^+_C(c)=\iota^+_C(jc)$ and $i(j)\iota^-_C(c)=\iota^-_C(j)\iota^-_C(c)=\iota^-_C(jc)=\iota^+_C(jc)$ in $C\ast_J C$. 
Note that $m(i(j))=j$ for any $j\in J$, hence $i$ is injective. As $\bar p(i(j))=0$ for any $j\in J$, $\bar p$ factorizes through a $*$-homomorphism $\hat p:C\ast_J C/i(J)\to A\ast A$ such that $\hat p(\iota^\pm_C(c))=\iota^\pm_A(p(c))$. It suffices to show that this $*$-homomorphism is bijective. 

To this end, consider the map 
$$
\begin{xymatrix}{
\hat\iota^+_C:C\ar[r]^-{\iota^+_C}& C\ast C\ar[r]&C\ast_J C/i(J).
}\end{xymatrix}
$$
It is easy to see that $\hat\iota^+_C(j)=0$ for any $j\in J$, hence $\hat\iota^+_C$ factorizes through the $*$-homomorphism $\gamma^+:A\to C\ast_J C/i(J)$. Similarly, we obtain a $*$-homomorphism $\gamma^-:A\to C\ast_J C/i(J)$ from $\iota^-_C$. By the universal property of the free product, the maps $\gamma^+$ and $\gamma^-$ give rise to a $*$-homomorphism $A\ast A\to C\ast_J C/i(J)$, which is obviously inverse to $\hat p$. 

\end{proof}

Thus, we can view $qA$ as an ideal in $C\ast_J C$ for any $C$ that surjects onto $A$, and get an extension
\begin{equation}\label{exact1}
\begin{xymatrix}{
0\ar[r]&qA\ar[r]&C\ast_J C\ar[r]^-{m}&C\ar[r]&0.
}\end{xymatrix}
\end{equation}

We also have an extension
\begin{equation}\label{exact2}
\begin{xymatrix}{
0\ar[r]&J\ar[r]&C\ast_J C\ar[r]^-{\bar p}&A\ast A\ar[r]&0
}\end{xymatrix}
\end{equation}

By Proposition 3.6 in \cite{Pedersen}, the two extensions (\ref{exact1}) and (\ref{exact2}) give a new extension
\begin{equation}\label{exact3}
\begin{xymatrix}{
0\ar[r]&qA\cap J\ar[r]&C\ast_J C\ar[r]&(A\ast A)\oplus_X C\ar[r]&0,
}\end{xymatrix}
\end{equation}
where $X=C\ast_J C/(qA+J)$. As $qA\cap J=0$ and $X=A$, we obtain an isomorphism $C\ast_J C\cong (A\ast A)\oplus_A C$.





\section{Some trivialities on strict topology}\label{strict}

Let $J$ be a 
$C^*$-algebra, $M(J)$ its multiplier algebra. The strict topology on $J$ (and on $M(J)$) is defined by the family of seminorms $p_j(x)=\|jx\|+\|xj\|$, $j\in J$ (here $x\in J$ or $x\in M(J)$).

Let $B$ be stable and $\sigma$-unital, and let $IB=B[0,1]$. 




Let $U_t$, $t\in(0,1]$, be a family of isometries in $M(B)$ such that 
\begin{itemize}
\item[(u1)]
$u_1=1$;
\item[(u2)]
the map $t\mapsto U_t$ is strictly continuous on $(0,1]$;
\item[(u3)]
$U_tU^*_t$ is strictly convergent to 0 as $t\to 0$.
\end{itemize}
For a $*$-homomorphism $\pi:J\to M(B)$, set 
$$
\pi_t(\cdot)=\left\lbrace\begin{array}{cl}U_t\pi(\cdot)U_t^*,&\mbox{if\ }t\in(0,1];\\
0,&\mbox{if\ }t=0.\end{array}\right.
$$ 
Denote by $\tilde{\pi}(\cdot)$ the map $t\mapsto\pi_t(\cdot)$. It is known that $\tilde{\pi}(j)\in M(IB)$ for any $j\in J$.

\begin{lem}\label{strict1}
Let $\pi:J\to M(B)$ be a $*$-homomorphism, continuous with respect to the strict topologies on $J$ and $M(B)$. Then $\tilde{\pi}:J\to M(IB)$ is continuous with respect to the strict topologies on $J$ and $M(IB)$. 

\end{lem}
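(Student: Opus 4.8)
The goal is to show that strict continuity of $\pi : J \to M(B)$ implies strict continuity of $\tilde\pi : J \to M(IB)$. The plan is to reduce the strict topology on $M(IB)$ to a controllable family of seminorms and then use property (u2) (strict continuity of $t \mapsto U_t$) together with uniform estimates to pass from $\pi$ to $\tilde\pi$. Concretely, I would first recall that the strict topology on $M(IB) = M(B[0,1])$ is generated by seminorms $x \mapsto \|xb\| + \|bx\|$ for $b \in IB = C([0,1], B)$, and that for $f \in M(IB)$ the norm $\|f\|$ equals $\sup_{t}\|f(t)\|$, while multiplication is pointwise. So it suffices to control, for a fixed $b \in C([0,1],B)$ and a net (or sequence, since everything in sight is $\sigma$-unital, but nets are cleaner) $j_\alpha \to j$ strictly in $J$, the quantity $\sup_{t \in [0,1]} \|(\tilde\pi(j_\alpha) - \tilde\pi(j))(t)\, b(t)\|$ and the symmetric one.

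For each fixed $t \in (0,1]$ we have $(\tilde\pi(j_\alpha) - \tilde\pi(j))(t) b(t) = U_t\bigl(\pi(j_\alpha) - \pi(j)\bigr)U_t^* b(t)$, and at $t = 0$ this is $0$. The naive bound $\|U_t(\pi(j_\alpha - j))U_t^* b(t)\| \le \|(\pi(j_\alpha - j)) U_t^* b(t)\|$ does not immediately help because $U_t^* b(t)$ need not lie in a fixed compact subset of $B$ on which strict convergence is uniform. The key step is therefore to exploit strict continuity of $t \mapsto U_t$ on the \emph{compact} interval $[\varepsilon, 1]$: for any $b' \in B$, the map $t \mapsto U_t^* b'$ is norm-continuous on $[\varepsilon,1]$, hence its image is norm-compact; combined with strict continuity of $t\mapsto U_t$, one shows that the set $K_\varepsilon = \{U_t^* b(t) : t \in [\varepsilon,1]\} \subset B$ is norm-compact (it is the continuous image of $[\varepsilon,1]$ under $t \mapsto U_t^* b(t)$, using joint strict continuity of $U_t$ in $t$ and norm-continuity of $b$). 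Strict convergence $\pi(j_\alpha) \to \pi(j)$ in $M(B)$ is then uniform on norm-compact subsets of $B$ (a standard fact: a strictly convergent net, being norm-bounded by uniform boundedness, converges uniformly on compacta). Hence $\sup_{t \in [\varepsilon,1]} \|(\pi(j_\alpha - j)) U_t^* b(t)\| = \sup_{c \in K_\varepsilon}\|(\pi(j_\alpha - j)) c\| \to 0$, and since $\|U_t\| = 1$ this controls the sup over $[\varepsilon,1]$.

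It remains to handle the interval $[0,\varepsilon)$, and this is where property (u3) enters. There one estimates $\|U_t \pi(j_\alpha - j) U_t^* b(t)\| \le \|U_t\pi(j_\alpha - j)U_t^*\|\,\|b(t)\|$; but a cleaner route, and the one I would take, is: $\tilde\pi(j)(t) = U_t \pi(j) U_t^*$ and one checks $\|U_t\pi(j)U_t^* b(t) - U_tU_t^* \pi(j) b(t)\|$ type manipulations, or more simply bound $\|\tilde\pi(j_\alpha - j)(t)\| \le \|j_\alpha - j\| \cdot \|\pi\|_{\text{cb}}$-style uniform bounds are \emph{not} available since $\pi$ need not be bounded on bounded sets in a way matching the strict topology — actually $\pi$ is a $*$-homomorphism so $\|\pi(j_\alpha - j)\| \le \|j_\alpha - j\|$, but $j_\alpha \to j$ strictly does not give $\|j_\alpha - j\| \to 0$. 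The correct device: choose $\varepsilon$ first so that $\sup_{t \in [0,\varepsilon]}\|U_tU_t^* b(t)\|$ and the relevant tails are small using (u3) and $b(0)\cdot(\text{stuff}) $— more precisely, since $\tilde\pi(j)(0) = 0$ and $\tilde\pi(j)$ is norm-continuous (it lies in $M(IB)$, so $t \mapsto \tilde\pi(j)(t)b(t)$ is continuous and vanishes at $0$), for the \emph{difference} one writes $(\tilde\pi(j_\alpha) - \tilde\pi(j))(t)b(t)$ and uses that both $\tilde\pi(j_\alpha)(\cdot)b(\cdot)$ and $\tilde\pi(j)(\cdot)b(\cdot)$ are continuous vanishing at $0$; choosing $\varepsilon$ uniformly in $\alpha$ is the subtle point — one does it by the factorization $\tilde\pi(j)(t) b(t) = U_t \pi(j) U_t^* b(t)$ and the bound $\|U_t\pi(j)U_t^* b(t)\| \le \|\pi(j)U_t^* b(t)\| \le \|\pi(j)\|\,\|U_t^* b(t)\|$, noting $\|U_t^* b(t)\| \le \|b(t)\| \to \|b(0)\|$... hmm, this shows it is bounded near $0$ but not small. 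The genuinely correct small-$t$ estimate uses (u3) directly: $\|U_t\pi(j_\alpha - j)U_t^* b(t)\|$, insert an approximate unit or rather observe $\pi(j_\alpha - j) = \pi(j_\alpha - j)\,e$ for suitable $e$... The honest statement: \textbf{the main obstacle} is obtaining the small-$t$ estimate uniformly in the net $\alpha$. I expect this is resolved by the observation that $(\tilde\pi(j_\alpha) - \tilde\pi(j))(t) = U_t(\pi(j_\alpha) - \pi(j))U_t^*$ and, for $x \in M(B)$ and $b' \in B$, $\|U_t x U_t^* b'\| \le \|x\|\,\|U_t^* b'\|$ combined with: strict convergence gives a uniform norm bound $\sup_\alpha \|\pi(j_\alpha) - \pi(j)\| =: M < \infty$, and then $\sup_{t \le \varepsilon}\|(\tilde\pi(j_\alpha) - \tilde\pi(j))(t)b(t)\| \le M \sup_{t \le \varepsilon}\|U_t^* b(t)\|$, and $\sup_{t\le\varepsilon}\|U_t^* b(t)\| \to \|U_0^* b(0)\|$... which is not $0$ in general either. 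So in fact property (u3), $U_tU_t^* \to 0$ strictly, must be used as: $\|U_t^* b'\|^2 = \|b'^* U_t U_t^* b'\| \le \|b'\| \cdot \|b'^* U_t U_t^*\| \to 0$ as $t \to 0$, \emph{provided} $b' = b(0)$ is fixed — but $b(t)$ varies. However $\|U_t^* b(t)\| \le \|U_t^*(b(t) - b(0))\| + \|U_t^* b(0)\| \le \|b(t) - b(0)\| + \|U_t^* b(0)\|$, and both terms $\to 0$ as $t \to 0$ by continuity of $b$ and by (u3). This gives $\sup_{t \le \varepsilon}\|U_t^* b(t)\| \to 0$ as $\varepsilon \to 0$, hence choosing $\varepsilon$ small makes the $[0,\varepsilon]$-contribution $\le M\varepsilon'$ for all $\alpha$ simultaneously. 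Combining the two ranges $[0,\varepsilon]$ and $[\varepsilon,1]$ completes the proof; the symmetric seminorm $x \mapsto \|b(t)x\|$ is handled identically using (u1)–(u3) on the left.
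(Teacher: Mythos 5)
Your final assembled argument is correct, and it rests on exactly the same three ingredients as the paper's proof: a uniform norm bound $M=\sup_\alpha\|\pi(j_\alpha-j)\|$ obtained from Banach--Steinhaus, norm-continuity of $t\mapsto U_t^*b(t)$ away from $t=0$ (from (u2)), and the estimate $\|U_t^*b(0)\|^2=\|b(0)^*U_tU_t^*b(0)\|\to 0$ near $t=0$ (from (u3)). The packaging, however, is genuinely different. You give a direct proof: split $[0,1]$ into $[0,\varepsilon]\cup[\varepsilon,1]$, observe that $K_\varepsilon=\{U_t^*b(t):t\in[\varepsilon,1]\}$ is norm-compact, and invoke the fact that a norm-bounded, strictly convergent net converges uniformly on norm-compact sets; the small-$t$ range is killed uniformly in $\alpha$ by choosing $\varepsilon$ first. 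The paper instead argues by contradiction: it extracts times $t_\mu$ where the seminorm stays above $\delta$, passes to an accumulation point $t_0$, and in the case $t_0>0$ only needs strict convergence tested against the \emph{single} element $h_{t_0}=U_{t_0}^*b_{t_0}$, avoiding the uniform-convergence-on-compacta lemma altogether; the case $t_0=0$ is your small-$t$ estimate. Your route buys a constructive, quantifier-explicit proof at the cost of one extra (standard) lemma; the paper's buys brevity at the cost of an indirect argument. One shared delicate point: both proofs assert that a strictly convergent \emph{net} is norm-bounded via Banach--Steinhaus, which is the same (slightly informal for nets) step, so you are on equal footing there. For a final write-up you should of course strip out the exploratory detours and false starts in the middle of your text and present only the last, correct chain of estimates, including the adjoint argument reducing the left seminorm $\|b\,\tilde\pi(\cdot)\|$ to the right one.
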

\begin{proof}
As $J$ is dense in $M(J)$, $\pi$ extends to a map on $M(J)$. Let $(j_\lambda)_{\lambda\in\Lambda}$ be a net in $M(J)$ strictly convergent to 0. By assumption, $\pi(j_\lambda)$ is then strictly convergent to 0, so, by Banach--Steinhaus Theorem,
the set $\{\pi(j_\lambda):\lambda\in\Lambda\}$ is norm-bounded, i.e. there exists $C$ such that $\sup_{\lambda\in\Lambda}\|\pi(j_\lambda)\|<C$.

Let $b\in IB$, $b_t=\ev_t(b)\in B$. 
We claim that 
$$
\lim_{\Lambda}\|\tilde{\pi}(j_\lambda)b\|=\lim_{\Lambda}\sup_{t\in[0,1]}\|\pi_t(j_\lambda)b_t\|=0.
$$ 

Assume the contrary: there exists $\delta>0$ such that for any $\mu\in\Lambda$ there is $\lambda_\mu\geq\mu$ and $t_\mu\in[0,1]$ such that $\|\pi_{t_\mu}(j_{\lambda_\mu})b_{t_\mu}\|>\delta$. Let $t_0$ be an accumulation point for the net $\{t_\mu\}_{\mu\in\Lambda}$, i.e. for every $\varepsilon>0$ and for every $\nu\in\Lambda$ there exists $\mu\geq \nu$ such that $|t_\mu-t_0|<\varepsilon$. Consider the two cases: (a) $t_0>0$ and (b) $t_0=0$. 

Case (a): In this case we may assume that $t_\lambda>0$ for any $\lambda\in\Lambda$. Then
\begin{equation}\label{1}
\delta<\|\pi_{t_\mu}(j_{\lambda_\mu})b_{t_\mu}\|=\|U_{t_\mu}\pi(j_{\lambda_\mu})U^*_{t_\mu}b_{t_\mu}\|\leq \|\pi(j_{\lambda_\mu})U^*_{t_\mu}b_{t_\mu}\|.
\end{equation}
As $t\mapsto U^*_t$ is strictly continuous on $(0,1]$, the product $t\mapsto h_t=U^*_tb_t$ is norm-continuous at $t_0$, so for any $\nu\in\Lambda$ there exists $\mu\geq\nu$ such that $\|h_{t_\mu}-h_{t_0}\|<\delta/2C$.
Then, for this $\mu$, 
\begin{equation}\label{2}
\|\pi(j_{\lambda_\mu})U^*_{t_\mu}b_{t_\mu}\|\leq 
\|\pi(j_{\lambda_\mu})\|\ \|h_{t_\mu}-h_{t_0}\|+\|\pi(j_{\lambda_\mu})h_{t_0}\|
< \delta/2+\|\pi(j_{\lambda_\mu})h_{t_0}\|.
\end{equation}
It follows from (\ref{1}) and (\ref{2}) that for any $\nu\in\Lambda$ there exists $\mu\geq\nu$ such that $\|\pi(j_{\lambda_\mu})h_{t_0}\|>\delta/2$. 
This contradicts strict continuity of $\pi$.

Case (b): Here we have
\begin{eqnarray*}
\delta^2&<&\|\pi_{t_\mu}b_{t_\mu}\|^2=\|b_{t_\mu}^*\pi_{t_\mu}(j_{\lambda_\mu}^*j_{\lambda_\mu})b_{t_\mu}\|=\|b_{t_\mu}^*U_{t_\mu}\pi(j_{\lambda_\mu}^*j_{\lambda_\mu})U^*_{t_\mu}b_{t_\mu}\| \\
&\leq& \|\pi(j_{\lambda_\mu}^*j_{\lambda_\mu})b_{t_\mu}^*U_{t_\mu}U^*_{t_\mu}b_{t_\mu}\|\leq C^2\|b\|\ \|U_{t_\mu}U^*_{t_\mu}b_{t_\mu}\|,
\end{eqnarray*}
i.e. for any $\mu\in\Lambda$ there exists $t_\mu\in[0,1]$ such that $\|U_{t_\mu}U_{t_\mu}^*b_{t_\mu}\|>\frac{\delta^2}{C\|b\|}$.
This contradicts strict continuity of the map $t\mapsto U_tU^*_t$.

Similarly, one can show that $\lim_{\Lambda}\|b\tilde{\pi}(j_\lambda)\|=0$, therefore, $\tilde{\pi}$ is continuous with respect to the strict topologies.

\end{proof}

\begin{lem}\label{strict2}
Let $q:J'\to J$ be a surjective $*$-homomorphism, and let $\pi:J\to M(B)$ be a $*$-homomorphism, continuous with respect to the strict topologies on $J$ and $M(B)$. Then $\pi\circ q$ is continuous with respect to the strict topologies on $J'$ and $M(B)$.

\end{lem}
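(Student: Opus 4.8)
The plan is to deduce continuity of $\pi\circ q$ with respect to the strict topologies directly from continuity of $\pi$, using the fact that a surjective $*$-homomorphism of $C^*$-algebras is open and hence behaves well with respect to strict topologies. First I would observe that $q$ extends to a unital $*$-homomorphism $M(q):M(J')\to M(J)$ provided $q$ is proper (i.e. maps an approximate unit of $J'$ to an approximate unit of $J$), which is automatic here since $q$ is surjective; more elementarily, since $J'$ is dense in $M(J')$ and $J$ is dense in $M(J)$, one only needs to track strict convergence on bounded nets, so working with $M(q)$ can be avoided entirely if desired.

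The key step is the following: if $(j'_\lambda)_{\lambda\in\Lambda}$ is a net in $J'$ strictly convergent to $0$, then $(q(j'_\lambda))$ is strictly convergent to $0$ in $J$. Granting this, strict continuity of $\pi$ immediately gives that $\pi(q(j'_\lambda))\to 0$ strictly in $M(B)$, which is exactly the assertion. To prove the key step, fix $j\in J$; since $q$ is surjective, by the lifting property for $C^*$-algebras (the quotient map admits a positive, norm-decreasing, though not multiplicative, lift, or simply: choose any $j''\in J'$ with $q(j'')=j$) we write $j=q(j'')$. Then
\eq{
\|j\,q(j'_\lambda)\| = \|q(j''\,j'_\lambda)\| \le \|j''\,j'_\lambda\| \longrightarrow 0,
}
because $p_{j''}(j'_\lambda)=\|j''j'_\lambda\|+\|j'_\lambda j''\|\to 0$ by hypothesis, and similarly $\|q(j'_\lambda)\,j\|=\|q(j'_\lambda j'')\|\le\|j'_\lambda j''\|\to 0$. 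Since $j\in J$ was arbitrary, $q(j'_\lambda)\to 0$ in the strict topology on $J$. Note the norm-decreasing property $\|q(x)\|\le\|x\|$ of any $*$-homomorphism is all that is used here, not openness of $q$.

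I do not expect any serious obstacle: the statement is essentially a one-line consequence of the fact that $*$-homomorphisms are contractive together with surjectivity of $q$ used to realize every element of $J$ as an image. The only point requiring a moment's care is that the seminorms defining the strict topology on $J$ are indexed by elements of $J$ (not $J'$), so one must pull them back through $q$; surjectivity makes this pullback surjective at the level of the index sets, which is precisely why the argument closes. Since the proof only tests against bounded nets, no appeal to Banach--Steinhaus or to the multiplier extension $M(q)$ is strictly necessary, though one could phrase it that way for uniformity with the proof of Lemma \ref{strict1}.
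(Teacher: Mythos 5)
Your argument is correct and is essentially the paper's: both proofs reduce the statement to strict continuity of $q$ itself, established by lifting the index element $j\in J$ to some $j''\in J'$ via surjectivity and using contractivity of $*$-homomorphisms to get $p_j(q(x))\le p_{j''}(x)$, after which one composes with the strictly continuous $\pi$. The paper phrases this with basic neighborhoods rather than nets, but the content is identical; your opening digression about $M(q)$ and properness is unnecessary, as you yourself note.
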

\begin{proof}
Surjectivity of $q$ implies that $q$ is continuous with respect to the strict topologies on $J'$ and on $J$. Indeed, let $V_j=\{x\in J:p_j(x)<1\}\subset J$, $j\in J$. Let $j'\in J'$, $q(j')=j$. Set $W_{j'}=\{x'\in J':p_{j'}(x')<1\}\subset J'$. Then $W_{j'}$ is open and $q(W_{j'})\subset V_j$.

\end{proof}

\section{A technical lemma}

Recall that $M(B)$ can be considered as the algebra of adjointable bounded operators on the Hilbert $C^*$-module $B$ over itself when $B$ is stable. A $*$-homomorphism $\pi:J\to M(B)$ is quasi-unital \cite{Thomsen-Duke} if there exists a projection $e\in M(B)$ such that $\overline{\pi(J)B}=eB$. Lemma 2.14 of \cite{Thomsen-Duke} shows that $\pi$ is quasi-unital iff it is strictly continuous, and that in this case it admits a unique extension to a strictly continuous $*$-homomorphism $\bar\pi:M(J)\to M(B)$.

\begin{lem}\label{technical}
Let $J$ be an ideal in a separable $C^*$-algebra $D$, let $\pi:D\to M(B)$ be a $*$-homomorphism such that $\pi|_J$ is quasi-unital, and let $e\in M(B)$ be the corresponding projection. Then 
\begin{itemize}
\item[(1)]
$[\pi(d),e]=0$ for any $d\in D$
\item[(2)]
if $dJ=0$ then $\pi(d)e=0$.
\end{itemize}
\end{lem}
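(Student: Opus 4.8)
The key point is that quasi-unitality of $\pi|_J$ means $\overline{\pi(J)B}=eB$, so $e$ is the identity operator on the closed submodule $\overline{\pi(J)B}$. I would first establish (1) for elements $d\in D$ and then derive (2), but actually it is cleaner to handle both at once by examining how $\pi(d)$ acts on the decomposition $B=eB\oplus(1-e)B$. First I would note that since $J$ is an ideal, $dj\in J$ for every $j\in J$, so $\pi(d)\pi(j)=\pi(dj)\in\pi(J)$, whence $\pi(d)\bigl(\pi(J)B\bigr)\subseteq\pi(J)B$ and therefore $\pi(d)(eB)\subseteq eB$ by taking closures. Equivalently $(1-e)\pi(d)e=0$. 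Taking adjoints (and using that $\pi(d^*)$ is again the image of an element of $D$, so the same conclusion applies to $d^*$) gives $(1-e)\pi(d^*)e=0$, hence $e\pi(d)(1-e)=0$. Adding these two relations yields $\pi(d)e=e\pi(d)e=e\pi(d)$, i.e. $[\pi(d),e]=0$, which is (1).

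For (2), suppose $dJ=0$. Then for every $j\in J$ we have $\pi(d)\pi(j)=\pi(dj)=0$, so $\pi(d)$ annihilates $\pi(J)B$ and hence, by continuity, annihilates its closure $eB$. Thus $\pi(d)e=0$, which is exactly the claim. Note that (2) does not even need part (1); it is a direct consequence of $\overline{\pi(J)B}=eB$ together with $\pi(d)\pi(J)=0$.

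**Where the subtlety lies.** The only genuine content is the identity $\overline{\pi(J)B}=eB$ and the fact that one may pass to closures in the module-action argument; the separability of $D$ and the strict-continuity/quasi-unitality equivalence from Lemma~\ref{technical}'s preamble (Lemma 2.14 of \cite{Thomsen-Duke}) are what guarantee the projection $e$ exists in the first place, but once $e$ is in hand the argument is formal. The step I would watch most carefully is the adjoint manipulation in (1): one must be sure that ``$\pi$ restricted to $J$ quasi-unital'' really does force $e$ to commute with the whole of $\pi(D)$ and not merely with $\pi(J)$ — this works precisely because $J$ is a two-sided ideal, so both $dJ\subseteq J$ and $Jd\subseteq J$, which is what makes $eB$ invariant under both $\pi(d)$ and $\pi(d^*)$. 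No serious obstacle is expected; the lemma is correctly labelled ``technical'' rather than ``hard.''
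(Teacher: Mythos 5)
Your proof is correct, but it takes a genuinely different and noticeably more economical route than the paper. You work directly in the Hilbert module picture: $e$ is the projection onto the closed submodule $\overline{\pi(J)B}$, and since $J$ is a two-sided ($*$-closed) ideal, $\pi(d)$ and $\pi(d^*)$ both leave $\pi(J)B$ (hence its closure $eB$) invariant, giving $(1-e)\pi(d)e=0$ and, by the adjoint, $e\pi(d)(1-e)=0$, so $[\pi(d),e]=0$; part (2) is immediate because $\pi(d)$ kills $\pi(J)B$ and hence $eB$. The paper instead forms $J^{\perp}=\{d\in D: dJ=0\}$, observes that $J\oplus J^{\perp}$ is essential in $D$, embeds $D$ into $M(J)\oplus M(J^{\perp})$, and extends $\pi$ to a $*$-homomorphism $\bar\pi$ on $M(J)\oplus D'$ using the strict continuity of $\pi|_J$ and a quasicentral approximate unit; both claims then follow because $(1,0)$ is central relative to $D$ and $\bar\pi(1,0)=e$. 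Your argument buys brevity, avoids the approximate-unit and strict-limit manipulations entirely, and does not use separability of $D$ at all (nor, in fact, the full strength of Thomsen's strict-continuity characterization --- only the defining identity $\overline{\pi(J)B}=eB$); the paper's argument is heavier but produces the extension $\bar\pi$ as a by-product, which could in principle be reused, though it is not needed elsewhere in the paper. The one step you should make explicit is that $(1-e)\pi(d)e\,b=0$ for all $b\in B$ implies $(1-e)\pi(d)e=0$ in $M(B)$, which holds because $B$ is an essential ideal in its multiplier algebra; with that said, there is no gap.
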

\begin{proof}
Set $J^\perp=\{d\in D:dJ=0\}$. Then $J^\perp$ is an ideal in $D$, and $J\cap J^\perp=0$. It is easy to see that $J\oplus J^\perp$ is an essential ideal in $D$, hence 
$$
J\oplus J^\perp\subset D\subset M(J\oplus J^\perp)=M(J)\oplus M(J^\perp).
$$ 
Let $q_1$ and $q_2$ be the projections of $M(J)\oplus M(J^\perp)$ onto the first and the second summands respectively, and let $D'=p_2(D)$. Then $D\subset M(J)\oplus D'$. Let us construct a $*$-homomorphism $\bar\pi:M(J)\oplus D'\to M(B)$ that extends $\pi$.

The extension of $\pi|_J$ to $M(J)$ exists due to strict continuity of $\pi|_J$, so it remains to define $\bar\pi$ on $D'$. Let $a'\in D'$. Then there is $a\in D$ such that $q_2(a)=a'$. Set
$$
\bar\pi(a')=\pi(a)-\bar\pi(q_1(a)).
$$ 
If $q_2(b)=a$ then $q_2(a-b)=0$, hence $a-b=q_1(a-b)\in D\cap M(J)$, so 
$$
\pi(a-b)=\pi(q_1(a-b))=\bar\pi(q_1(a))-\bar\pi(q_1(b)). 
$$
Thus, $\bar\pi$ is well-defined.

Let us check multiplicativity of $\bar\pi|_{D'}$. Let $a,b\in D$, $q_2(a)=a'$, $q_2(b)=b'$, then
$$
\bar\pi(a'b')-\bar\pi(a')\bar\pi(b')=\pi(a)\bar\pi(q_1(b))+\bar\pi(q_1(a))\pi(b)-2\bar\pi(q_1(ab)).
$$
Let $\{u_t\}_{t\in[0,\infty)}$ be an approximate unit in $J$, quasicentral in $D$. Then 
$$
q_1(a)=s\mbox{-}\lim_{t\to\infty}q_1(a)u_t=s\mbox{-}\lim_{t\to\infty}q_1(au_t), 
$$
where $s\mbox{-}\lim$ denotes the strict limit, so
$$
\bar\pi(q_1(a))\pi(b)-\bar\pi(q_1(ab))=s\mbox{-}\lim_{t\to\infty}\pi(q_1(au_t))\pi(b)-\pi(q_1(abu_t))=
s\mbox{-}\lim_{t\to\infty}\pi(q_1(au_tb-abu_t))=0.
$$
Similarly, $\pi(a)\bar\pi(q_1(b))-\bar\pi(q_1(ab))=0$, hence $\bar\pi|_{D'}$ is multiplicative. Finally, if $m\in M(J)$, $a'\in D'$ then 
$$
\bar\pi(m)\bar\pi(a')=s\mbox{-}\lim_{t\to\infty}\pi(mu_t)(\pi(a)-\bar\pi(q_1(a)))=s\mbox{-}\lim_{t\to\infty}\pi(mu_ta-mu_tq_1(a))=0.
$$
Thus, $\bar\pi:M(J)\oplus D'\to M(B)$ is a $*$-homomorphism. If $d\in D$ then 
$$
\bar\pi(d)=\bar\pi(q_1(d))+\bar\pi(q_2(d))=\bar\pi(q_1(d))+\pi(d)-\bar\pi(q_1(d))=\pi(d),
$$
so $\bar\pi$ extends $\pi$.

Let $1\in M(J)$, $(1,0)\in M(J)\oplus D'$. Then $[(1,0),d]=0$ for any $d\in D$. As $\bar\pi(1,0)=e$, we have $[e,\bar\pi(d)]=0$. If $dJ=0$ then $d(1,0)=0$, hence $\pi(d)e=0$.

\end{proof}

\section{Generalized $KK$-cycles}

Let $A$ be a separable $C^*$-algebra. Consider all surjective $*$-homomorphisms $p:C\to A$, where $C$ is a separable $C^*$-algebra, with a partial order given by $(C,p)\leq (C',p')$ if there exists a surjective $*$-homomorphism $\lambda:C'\to C$ such that $p'=p\circ\lambda$. Denote the set of all such $C$ by $\mathcal E_A$. Abusing the notation, we shall write $C$ in place of $(C,p)$. The set $\mathcal E_A$ is directed. Indeed, if $p_1:C_1\to A$, $p_2:C_2\to A$ are surjections then the pull-back 
$$
C=\{(c_1,c_2):c_1\in C_1,c_2\in C_2,p_1(c_1)=p_2(c_2)\}
$$ 
obviously surjects onto $C_1$, $C_2$ and $A$, hence satisfies $C\geq C_1$, $C\geq C_2$. 

This directed set has a minimal and a maximal elements. The minimal element is $C=A$, and the maximal element was constructed in \cite{Cuntz2}, Section 2, under the name of {\it universal extension}.

Let $C\in\mathcal E_A$, and let $J=\Ker p$. Let $B$ be a stable $\sigma$-unital $C^*$-algebra, $M(B)$ its multiplier algebra.
\begin{defn}
A generalized $KK$-cycle from $A$ to $B$ is a pair $(\varphi_+,\varphi_-)$ of $*$-homomorphisms $\varphi_\pm:C\to M(B)$, where $C\in\mathcal E_A$, such that 
\begin{itemize}
\item[(1)] $\varphi_+(c)-\varphi_-(c)\in B$ for any $c\in C$;
\item[(2)] $\varphi_+|_J=\varphi_-|_J$.
\end{itemize}

A generalized $KK$-cycle is strict if $\varphi_+|_J:J\to M(B)$ is continuous with respect to the strict topologies on $J$ and on $M(B)$. 

Two (strict) generalized $KK$-cycles $(\varphi_+^0,\varphi_-^0)$ and $(\varphi_+^1,\varphi_-^1)$ from $A$ to $B$, with given $C\in\mathcal E_A$, are homotopic if there is a (strict) generalized $KK$-cycle $(\Phi_+,\Phi_-)$ from $A$ to $IB=C([0,1];B)$ (with the same $C$) such that the evaluation maps at 0 and at 1 give $(\varphi_+^0,\varphi_-^0)$ and $(\varphi_+^1,\varphi_-^1)$.

\end{defn}

Let $p:C\to A$ be a surjection, $J=\Ker p$, and let $KM(C,J;B)$ be the set of homotopy equivalence classes of generalized $KK$-cycles $(\varphi_+,\varphi_-)$ from $A$ to $B$ (with the given $C$). It has a natural structure of an abelian semigroup with the zero element $(0,0)$ due to stability of $B$, and it is easy to see that  $(\varphi,\varphi)$ is the trivial element: let $U_t$, $t\in(0,1]$ be the family of isometries as in Section \ref{strict}. Then the required homotopy is given by 
$$
\varphi_t(\cdot)=\left\lbrace\begin{array}{cl}U_t\varphi(\cdot)U_t^*,&\mbox{\ if\ }t\in(0,1];\\0,&\mbox{\ if\ }t=0.\end{array}\right.
$$ 
Thus, $KM(C,J;B)$ is an abelian group.
Set 
$$
KM(A,B)=\injlim_{C\in\mathcal E_A}KM(C,J;B)
$$ 
(it is easy to see that if $C\leq C'$ in $\mathcal E_A$ then the composition with the map $C'\to C$ gives a canonical map $KM(C,J;B)\to KM(C',J';B)$, where $J'$ is the kernel of the surjection $C'\to A$).

Similarly, let $KS(C,J;B)$ be the set of homotopy equivalence classes of strict generalized $KK$-cycles $(\varphi_+,\varphi_-)$ from $A$ to $B$ (with the given $C$). It is an abelian group by the same argument, taking into account Lemma \ref{strict1}. 
Set 
$$
KS(A,B)=\injlim_{C\in\mathcal E_A}KS(C,J;B)
$$ 
(it is easy to see that if $C\leq C'$ in $\mathcal E_A$ then, by Lemma \ref{strict2}, the composition with the map $C'\to C$ gives a map $KS(C,J;B)\to KS(C',J';B)$, where $J'$ is the kernel of the surjection $C'\to A$). Forgetting about strict continuity, we get a map $KS(A,B)\to KM(A,B)$.

As $A\in\mathcal E_A$, there are canonical maps $i_M:KK(A,B)\to KM(A,B)$ and $i_S:KK(A,B)\to KS(A,B)$. 

Now let $C\in\mathcal E_A$.
Note that, by the universal property of the free product of $C^*$-algebras, any two $*$-homomorphisms $\varphi_\pm:C\to M(B)$ with $\varphi_+(c)-\varphi_-(c)\in B$, $c\in C$, define a $*$-homomorphism $\varphi_+\ast\varphi_-:C\ast C\to M(B)$ such that $\varphi_+\ast\varphi_-(qC)\subset B$. If the pair $(\varphi_+,\varphi_-)$ is a (strict) generalized $KK$-cycle then $\varphi_+|_J=\varphi_-|_J$, hence $\varphi_+\ast\varphi_-$ factorizes through $C\ast_J C$. By Lemma \ref{qA=}, $qA$ is an ideal in $C\ast_J C$, and restricting this map onto $qA\subset C\ast_J C$, we obtain a $*$-homomorphism $q(\varphi_+,\varphi_-):qA\to B$. As there is a canonical isomorphism $KK(A,B)\cong [qA,B]$, where $[X,Y]$ denotes the set of homotopy classes of $*$-homomorphisms from $X$ to $Y$ (which is an abelian group when $X=qA$ and $Y$ is stable), \cite{Cuntz-qA}, we obtain homomorphisms $j_M:KM(A,B)\to KK(A,B)$ and $j_S:KS(A,B)\to KK(A,B)$.

\section{Generalized $KK$-cycles as $KK$-bifunctor}

\begin{thm}
Let $A$ be separable and $B$ $\sigma$-unital and stable. Then the groups $KS(A,B)$ and $KK(A,B)$ are canonically isomorphic, and $KM(A,B)$ contains $KK(A,B)$ as a direct summand.

\end{thm}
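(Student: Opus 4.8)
The plan is to show that the composites $j_S\circ i_S$ and $j_M\circ i_M$ are the identity on $KK(A,B)$, and that $i_S\circ j_S$ is the identity on $KS(A,B)$; the direct-summand claim for $KM$ then follows formally since $i_M$ is split injective via $j_M$. For the first point, recall that $KK(A,B)\cong[qA,B]$, and that the construction $(\varphi_+,\varphi_-)\mapsto q(\varphi_+,\varphi_-)$ applied to the case $C=A$ (so $J=0$, $\Ker m = qA$ by Lemma \ref{qA=}) is exactly Cuntz's identification: starting from a $*$-homomorphism $\psi:qA\to B$, one builds the universal quasihomomorphism $\iota_A^\pm:A\to qA\rtimes$-type extension and checks $j_S\circ i_S=\id$ directly from the universal property of the free product. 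This step is essentially bookkeeping and I do not expect trouble here.

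The substance is the reverse inclusion $i_S\circ j_S=\id$ on $KS(A,B)$, i.e. every strict generalized $KK$-cycle $(\varphi_+,\varphi_-)$ with arbitrary $C\in\mathcal E_A$ is homotopic, after passing along the inductive system to $C=A$, to the cycle reconstructed from $q(\varphi_+,\varphi_-):qA\to B$. Here is where Lemma \ref{technical} enters. Form $\varphi=\varphi_+\ast\varphi_-:C\ast_J C\to M(B)$; by Section 1 we have $C\ast_J C\cong (A\ast A)\oplus_A C$, and $\varphi$ restricted to the ideal $qA$ lands in $B$. The strictness hypothesis says $\varphi|_J=\varphi_+|_J$ is strictly continuous, hence quasi-unital by Lemma 2.14 of \cite{Thomsen-Duke}: there is a projection $e\in M(B)$ with $\overline{\varphi(J)B}=eB$. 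Apply Lemma \ref{technical} with $D=C\ast_J C$ (separable) and its ideal $J=i(J)$: we get $[\varphi(x),e]=0$ for all $x\in C\ast_J C$, and $\varphi(x)e=0$ whenever $xJ=0$ — in particular $\varphi(qA)e=0$ since $qA\cap J=0$ and $qA\cdot J\subset qA\cap J=0$. Thus $e$ is a central projection for the image of $\varphi$ that kills the part carrying the $KK$-information. Compressing by $1-e$ therefore changes nothing on $qA$, while on the $J$-part it kills $\varphi|_J$; a standard Cuntz-type rotation homotopy (using the isometries $U_t$ of Section \ref{strict}, as in the proof that $(\varphi,\varphi)$ is trivial) then connects $(1-e)\varphi_\pm(1-e)$ to $(\varphi_+,\varphi_-)$ itself, reducing us to the case $\varphi_+|_J=\varphi_-|_J=0$.

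Once $\varphi_\pm$ vanish on $J$, the pair $(\varphi_+,\varphi_-)$ factors through $C/J\cong A$, i.e. it comes from a genuine Cuntz quasihomomorphism $A\rightrightarrows M(B)$, which by construction represents $j_S[(\varphi_+,\varphi_-)]$ and lies in the image of $i_S$. Tracking this through the inductive limit — using that the connecting maps $KS(C,J;B)\to KS(C',J';B)$ are compatible, and that $C=A$ is the minimal element — shows the two classes agree in $KS(A,B)$. I expect the main obstacle to be the homotopy in the previous paragraph that removes the $J$-part: one must check that the rotation can be performed \emph{within strict generalized $KK$-cycles over $IB$}, i.e. that the isometry family $U_t$ keeps condition (1) ($\Phi_+(c)-\Phi_-(c)\in IB$) and preserves strict continuity of $\Phi_+|_J$; Lemmas \ref{strict1} and \ref{strict2} are exactly what is needed to control the latter, but assembling the homotopy so that it simultaneously fixes the $qA$-behaviour and trivialises the $eB$-part requires care, and is the one place where the strictness hypothesis is genuinely used (without it one only lands in $KM$, explaining the asymmetry in the statement).
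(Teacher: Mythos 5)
Your proposal is correct and follows essentially the same route as the paper: $j\circ i=\id$ on $KK(A,B)$ by the identification $[qA,B]\cong KK(A,B)$, and $i_S\circ j_S=\id$ by using Lemma \ref{technical} to produce the projection $e$ commuting with the image of $\varphi_+\ast\varphi_-$, observing that the $qA$-part lands in $(1-e)M(B)(1-e)$, and then rotating to the compression by $1-e$ (the paper realizes your ``standard rotation'' concretely via Kasparov stabilization of $B\oplus eB\cong B$ and the resulting isometry family), after which the cycle kills $J$ and factors through $A$. The points you flag as needing care --- that the difference $\psi_{+,t}(c)-\psi_{-,t}(c)$ stays in $IB$ and that strict continuity on $J$ persists along the homotopy --- are exactly the ones the paper handles, via the constancy of the difference on $(1-e)M(B)(1-e)$ and Lemma \ref{strict1}.
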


{\it Proof} follows from the next two Lemmas.

\begin{lem}\label{mono}
$j_M\circ i_M$ and $j_S\circ i_S$ equal the identity map on $KK(A,B)$. 

\end{lem}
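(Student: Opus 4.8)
The plan is to show that both composites are the identity on $KK(A,B)\cong[qA,B]$ by tracing through the definitions with the minimal extension $C=A$, so that $J=0$. When $C=A$, a $*$-homomorphism $\varphi_\pm\colon A\to M(B)$ with $\varphi_+(a)-\varphi_-(a)\in B$ is precisely a quasihomomorphism in Cuntz's original sense; since $J=0$, condition (2) is vacuous and strict continuity on $J=0$ is automatic, so $i_S$ and $i_M$ are both just the inclusion of Cuntz quasihomomorphisms (with target $M(B)$) into the respective groups. The map $j_M$ (resp. $j_S$) sends such a pair to the restriction to $qA\subset A\ast_J A=A\ast A$ of the induced $*$-homomorphism $\varphi_+\ast\varphi_-\colon A\ast A\to M(B)$, which lands in $B$.

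First I would recall, following \cite{Cuntz-qA}, that the canonical isomorphism $KK(A,B)\cong[qA,B]$ is realized exactly by sending a $*$-homomorphism $f\colon qA\to B$ to the quasihomomorphism $(\iota^+_A, \iota^-_A)$ composed appropriately — more precisely, the class in $KK(A,B)$ corresponding to $f$ is represented by the pair of maps $A\rightrightarrows M(B)$ obtained from $f$ after stabilizing, and conversely a quasihomomorphism $(\varphi_+,\varphi_-)$ yields the map $q(\varphi_+,\varphi_-)\colon qA\to B$. The key point is that the construction $i$ (from $[qA,B]$ to quasihomomorphisms) and the construction $q$ (from quasihomomorphisms back to $[qA,B]$) are mutually inverse at the level of homotopy classes; this is the content of Cuntz's description of $KK$ via $qA$. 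So I would verify that when one starts with $f\colon qA\to B$, forms $i_M(f)$ (or $i_S(f)$) as a quasihomomorphism out of $C=A$, and then applies $j_M$ (or $j_S$), the resulting map $qA\to B$ is homotopic to $f$. Concretely: with $C=A$ we have $A\ast_J A=A\ast A$ and the ideal $qA\subset A\ast A$ is literally $qA$ via the isomorphism $p'$ of Lemma \ref{qA=} (here $p'$ is the identity, since $C=A$ and $J=0$); hence $q(i_M(f))$ is, by construction, nothing but $f$ itself (possibly up to the canonical identification used to define $i_M$), so the composite is the identity on the nose on representatives, a fortiori on homotopy classes.

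The main obstacle, and the step I would spend the most care on, is matching conventions: the definition of $i_M$ and $i_S$ as ``canonical maps'' coming from $A\in\mathcal E_A$ presupposes a specific identification of $KK(A,B)$ with homotopy classes of generalized $KK$-cycles over $C=A$, and one must check this identification is compatible with the $[qA,B]$ picture in such a way that $j_M$ inverts it. This amounts to checking that the two natural maps $KK(A,B)\to KM(A,B)$ — one via ``a $KK$-cycle is a Kasparov module, pass to a quasihomomorphism, view it over $C=A$'' and one via ``$[qA,B]\ni f\mapsto$ its canonical quasihomomorphism'' — agree, and that $j_M$ composed with either recovers the $[qA,B]$-class. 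Since all of this happens already inside Cuntz's framework (the case $C=A$ adds nothing new beyond allowing $M(B)$ rather than a multiplier algebra of a larger algebra as target, which is handled by stability of $B$), the verification is essentially a citation of \cite{Cuntz-qA} together with the observation that Lemma \ref{qA=} specializes to an identity when $C=A$. I would therefore conclude that $j_M\circ i_M=\id$ and, reading the same argument through the strict-continuity hypothesis (trivial since $J=0$), that $j_S\circ i_S=\id$ as well.
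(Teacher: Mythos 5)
Your proposal is correct and follows essentially the same route as the paper: specialize to $C=A$ (so $J=0$, making condition (2) and strict continuity vacuous), observe that $q(\varphi_+,\varphi_-)$ is then exactly the map realizing Cuntz's identification $KK(A,B)\cong[qA,B]$, and conclude. The paper's proof is just a terser version of this same observation.
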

\begin{proof}
Let $(\varphi_+,\varphi_-)$ represent an element of $KK(A,B)$. Then $j_M\circ i_M([(\varphi_+,\varphi_-)])=j_S\circ i_S([(\varphi_+,\varphi_-)])=q(\varphi_+,\varphi_-)$, and the claim follows from the identification $[qA,B]\cong KK(A,B)$.

\end{proof}

In general, we cannot prove that $i_M\circ j_M$ is the identity map on $KM(A,B)$, but

\begin{lem}\label{epi}
$i_S\circ j_S$ equals the identity map on $KS(A,B)$.

\end{lem}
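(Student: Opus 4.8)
The plan is to take a strict generalized $KK$-cycle $(\varphi_+,\varphi_-)$ with some $C\in\mathcal E_A$, $J=\ker p$, representing a class $\xi\in KS(A,B)$, and to manufacture from it an honest quasihomomorphism $(\bar\varphi_+,\bar\varphi_-)\colon A\to M(B)$ (a strict cycle with $C=A$) that simultaneously represents $j_S(\xi)$ and is homotopic, over $C$, to $(\varphi_+,\varphi_-)$. Write $\varphi=\varphi_+\ast_J\varphi_-\colon C\ast_J C\to M(B)$, so that $\psi:=\varphi|_{qA}\colon qA\to B$ represents $j_S(\xi)\in KK(A,B)\cong[qA,B]$ \cite{Cuntz-qA}. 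The decisive point is that strictness of the cycle says precisely that $\varphi_+|_J=\varphi_-|_J\colon J\to M(B)$ is strictly continuous, hence quasi-unital by Lemma~2.14 of \cite{Thomsen-Duke}; let $e\in M(B)$ be the corresponding projection, $\overline{\varphi_+(J)B}=eB$. Applying Lemma~\ref{technical} to the separable algebra $D=C\ast_J C$ and its ideal $i(J)\cong J$ (note $\varphi\circ i=\varphi_+|_J$) gives: $e$ commutes with all of $\varphi(C\ast_J C)$, in particular with every $\varphi_\pm(c)$; and $\varphi(y)e=0$ whenever $yJ=0$ in $C\ast_J C$. Using the pullback description $C\ast_J C\cong(A\ast A)\oplus_A C$ from \S1 one sees that $qA=\ker m$ annihilates $J$; hence $\varphi(qA)e=0$, and since $\iota^+_C(c)-\iota^-_C(c)\in\ker m=qA$, also $\varphi_+(c)e=\varphi_-(c)e$ for all $c$. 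Finally $\varphi_+(J)B\subseteq eB$ forces $\varphi_+(j)=e\varphi_+(j)=\varphi_+(j)e$ for $j\in J$.

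Set $\varphi_0(c)=e\varphi_+(c)e$. Since $e$ is central for $\varphi_\pm(C)$ and $\varphi_+(c)e=\varphi_-(c)e$, we get an orthogonal decomposition $\varphi_\pm(c)=\varphi_\pm(c)(1-e)+\varphi_0(c)$ in which $\varphi_0$ is a $*$-homomorphism agreeing with $\varphi_+=\varphi_-$ on $J$, while $c\mapsto\varphi_\pm(c)(1-e)$ vanishes on $J$. Consequently the $(1-e)$-compression $(1-e)\varphi(\cdot)(1-e)$ of $\varphi$ kills $\ker\bar p=J$ and descends to a $*$-homomorphism $\bar\varphi\colon A\ast A\to M(B)$ with $\bar\varphi\circ\bar p=(1-e)\varphi(\cdot)(1-e)$; set $\bar\varphi_\pm=\bar\varphi\circ\iota^\pm_A$, so that $\bar\varphi_\pm\circ p=\varphi_\pm(\cdot)(1-e)$ and $\bar\varphi_+(a)-\bar\varphi_-(a)=\bar\varphi(\iota^+_A(a)-\iota^-_A(a))\in B$; thus $(\bar\varphi_+,\bar\varphi_-)$ is a (vacuously strict) generalized $KK$-cycle with $C=A$. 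A direct computation, again via $C\ast_J C\cong(A\ast A)\oplus_A C$ together with $\varphi(qA)e=0$, shows $\bar\varphi|_{qA}=\psi$; hence under Cuntz's isomorphism $[qA,B]\cong KK(A,B)$ and the definition of $i_S$, the cycle $(\bar\varphi_+,\bar\varphi_-)$ represents $i_S(j_S(\xi))$ in $KS(A,B)$.

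It remains to show that $(\bar\varphi_+,\bar\varphi_-)$, viewed in $KS(C,J;B)$ via the structure map (precomposition with $p$), is homotopic to $(\varphi_+,\varphi_-)$; since $\bar\varphi_\pm\circ p=\varphi_\pm(\cdot)(1-e)$, this says $(\varphi_+,\varphi_-)\sim(\varphi_+(\cdot)(1-e),\varphi_-(\cdot)(1-e))$. By the decomposition above, $(\varphi_+,\varphi_-)$ is the internal direct sum, with respect to the orthogonal projections $1-e$ and $e$, of the strict cycles $(\varphi_+(\cdot)(1-e),\varphi_-(\cdot)(1-e))$ and the degenerate cycle $(\varphi_0,\varphi_0)$. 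The latter is trivial by the argument of \S4 for $(\varphi,\varphi)$, which applies since $\varphi_0|_J=\varphi_+|_J$ is strictly continuous and Lemma~\ref{strict1} keeps the connecting homotopy strict. And an internal orthogonal direct sum coincides with the external (Cuntz) sum in $KS(C,J;B)$: fixing isometries $s_1,s_2\in M(B)$ with $s_1s_1^*+s_2s_2^*=1$, the isometry $u=s_1(1-e)+s_2e\in M(B)$ satisfies $u\varphi_\pm(\cdot)u^*=s_1\varphi_\pm(\cdot)(1-e)s_1^*+s_2\varphi_0(\cdot)s_2^*$, and conjugation by an isometry of $M(B)$ does not change the $KS$-class — realized through a path of unitaries in $M(M_2(B))\cong M(B)$ from $1$ to $\left(\begin{smallmatrix} u & 1-uu^*\\ 0 & u^*\end{smallmatrix}\right)$, using that the unitary group of the multiplier algebra of a stable $\sigma$-unital $C^*$-algebra is connected and that (as in Lemma~\ref{strict1}) conjugation by a strictly continuous path of unitaries preserves strictness of the intermediate cycles. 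Combining, $[(\varphi_+,\varphi_-)]=[(\varphi_+(\cdot)(1-e),\varphi_-(\cdot)(1-e))]=i_S(j_S(\xi))$ in $KS(A,B)$.

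The crux is the use of Lemma~\ref{technical}: it is exactly the strictness hypothesis that makes $\varphi_+|_J$ quasi-unital and so produces the central projection $e$ giving a decomposition of $\varphi_\pm$ compatible with all of $C$ — the step with no analogue for non-strict cycles, which is why one only gets $KK(A,B)$ as a direct summand of $KM(A,B)$. Everything after that (triviality of degenerate cycles, identifying internal with external direct sums, invariance under conjugation by isometries), together with the chore of keeping each intermediate cycle strict via Lemma~\ref{strict1}, is routine.
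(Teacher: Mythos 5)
Your proposal is correct and rests on exactly the same pivot as the paper's proof: strictness of $\varphi_\pm|_J$ makes it quasi-unital, and Lemma~\ref{technical} applied to $J\subset C\ast_J C$ produces the projection $e$ commuting with $\varphi_\pm(C)$, with $\varphi_+(c)e=\varphi_-(c)e$ (since $\iota^+_C(c)-\iota^-_C(c)$ annihilates $J$) and $\varphi_\pm(j)=\varphi_\pm(j)e$. From there the two arguments diverge only in how they dispose of the degenerate $e$-corner. The paper does it by one explicit homotopy: it amplifies to $0\oplus\varphi_\pm$ in $M_2(M(B))$, uses Kasparov stabilization to identify $B\oplus eB$ with $B$, and runs a path of isometries $V_t$ that shrinks the $eB$-component to zero, landing directly on a cycle that kills $J$. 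You instead split $(\varphi_+,\varphi_-)$ as an internal orthogonal sum of the $(1-e)$-compression (which factors through $A$) and the degenerate cycle $(\varphi_0,\varphi_0)$, then invoke the standard formalism: degenerate cycles are nullhomotopic (the \S4 rotation, kept strict by Lemma~\ref{strict1}), internal sums agree with Cuntz sums after conjugating by the isometry $s_1(1-e)+s_2e$, and conjugation by isometries is implemented by a connected path of unitaries in $M(M_2(B))$. This is the same mathematics packaged through the group structure of $KS(C,J;B)$ rather than through one bespoke homotopy; it is slightly longer on bookkeeping (you must verify, as you note, that each of these standard moves preserves strictness of the intermediate cycles, and that the unitary path used for isometry-invariance can be chosen norm- or at least strictly continuous), but it makes transparent which steps are generic quasihomomorphism algebra and which step — the existence of $e$ — is where strictness is genuinely used. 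Your closing remark correctly identifies that last point as the obstruction for $KM$.
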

\begin{proof}
Let $C\in\mathcal E_A$, and let $(\varphi_+,\varphi_-)$ be a strict generalized $KK$-cycle from $A$ to $B$. 
We claim that there exists a strict generalized $KK$-cycle $(\psi_+,\psi_-)$ from $A$ to $B$ (with the same $C$) such that
\begin{itemize}
\item[(1)]
$(\psi_+,\psi_-)$ is homotopic to $(\varphi_+,\varphi_-)$;
\item[(2)]
$\psi_\pm$ factorize through $A$, i.e. there exists a $KK$-cycle $(\mu_+,\mu_-)$ from $A$ to $B$ such that $\psi_\pm=\mu_\pm\circ p$.
\end{itemize}
If the claim holds true then $j_S(\varphi_+,\varphi_-)=j_S(\psi_+,\psi_-)=(\mu_+,\mu_-)$ and $i_S(\mu_+,\mu_-)=(\psi_+,\psi_-)$, hence $i_S\circ j_S(\varphi_+,\varphi_-)=(\psi_+,\psi_-)$, and we are done, so let us prove the claim.  

Let $\varphi_+\ast\varphi_-:C\ast_J C\to M(B)$ be the free product of $\varphi_+$ and $\varphi_-$. As $\varphi_+\ast\varphi_-$ is strictly continuous on $J\subset C\ast_J C$, by Lemma \ref{technical}, there is a projection $e\in M(B)$ such that 
\begin{equation}\label{x0}
\varphi_\pm(J)\subset eM(B)e,
\end{equation} 
\begin{equation}\label{x1}
[\varphi_+\ast\varphi_-(d),e]=0 \mbox{\ for\ any\ } d\in C\ast_J C,
\end{equation}
and 
\begin{equation}\label{x2}
\varphi_+\ast\varphi_-(d)\in (1-e)M(B)(1-e) \mbox{\ for\ any\ } d\in C\ast_J C \mbox{\ such\ that\ }dJ=0.
\end{equation}
It follows from (\ref{x1}) that 
\begin{equation}\label{x3}
[\varphi_\pm(c),e]=0 
\end{equation}
for any $c\in C$. As $J$ is an ideal in $C$,
$$
\varphi_+(c)\varphi_+(j)=\varphi_+(cj)=\varphi_-(cj)=\varphi_-(c)\varphi_+(j),
$$ 
hence 
\begin{equation}\label{x4}
(\varphi_+(c)-\varphi_-(c))\varphi_\pm(j)=0
\end{equation}
for any $c\in C$ and any $j\in J$. Then, it follows from  
(\ref{x2}) that $\varphi_+(c)-\varphi_-(c)\in(1-e)M(B)(1-e)$ for any $c\in C$.

Take two copies $B\oplus B$ of the Hilbert $C^*$-module $B$ over itself, and replace the strict generalized $KK$-cycle $(\varphi_+,\varphi_-)$ by $(0\oplus\varphi_+,0\oplus\varphi_-)$, where $0\oplus\varphi_\pm:C\to M_2(M(B))$ is given by $0\oplus\varphi_\pm(c)=\left(\begin{matrix}0&0\\0&\varphi_\pm(c)\end{matrix}\right)$, $c\in C$.

Decompose $B\oplus B$ as $B\oplus eB\oplus(1-e)B$. Then elements of $M_2(M(B))$ can be written as $3{\times}3$ matrices with respect to this decomposition. Then $0\oplus\varphi_\pm(c)=\left(\begin{array}{cc}0&\begin{array}{cc}0&0\end{array}\\\begin{array}{c}0\\0\end{array}& \varphi_\pm(c)\end{array}\right)$.
By Kasparov Stabilization Theorem, the Hilbert $C^*$-module $B\oplus eB$ is isomorphic to $B$, hence there exists a family of isometries $U_t$, $t\in(0,1]$, on $B\oplus eB$ such that (u1)-(u3) holds. Then $V_t=\left(\begin{array}{cc}U_t&\begin{array}{c}0\\0\end{array}\\\begin{array}{cc}0&0\end{array}&1\end{array}\right)$ is also a family of isometries with
$\lim_{t\to 0}V_t=\left(\begin{array}{cc}\text{\Large 0}&\begin{array}{c}0\\0\end{array}\\\begin{array}{cc}0&0\end{array}&1\end{array}\right)=P$. 

Set 
$$
\psi_{\pm,t}(\cdot)=\left\lbrace\begin{array}{cl}V_t(0\oplus\varphi_\pm(\cdot))V_t^*,&\mbox{\ for\ }t\in(0,1];\\
P(0\oplus\varphi_\pm(\cdot))P,&\mbox{\ for\ }t=0.\end{array}\right.
$$ 
When $t\in(0,1]$, $V_t$ is an isometry, hence $\varphi_{\pm,t}$ is a $*$-homomorphism. When $t=0$, it is a $*$-isomorphism too, due to (\ref{x3}). It follows from (\ref{x4}) that $\psi_{+,t}(c)-\psi_{-,t}(c)$ does not depend on $t$, hence is norm-continuous as a map from $[0,1]$ to $M_2(B)$ for any $c\in C$. Lemma \ref{strict1} implies that $(\psi_{+,t},\psi_{-,t})$ is a homotopy connecting $(0\oplus\varphi_+,0\oplus\varphi_-)$ with $(\psi_{+,0},\psi_{-,0})$. Finally, note that if $j\in J$ then (\ref{x0}) implies that $\psi_{\pm,0}(j)=0$, therefore, $\psi_{\pm,0}$ factorize through $A$, i.e. there exist $*$-homomorphisms $\mu_\pm:A\to M(B)$ such that $\psi_{\pm,0}=\mu_\pm\circ p$. As $(\psi_{+,0},\psi_{-,0})$ is a generalized $KK$-cycle, $(\mu_+,\mu_-)$ is a $KK$-cycle.

\end{proof}

\begin{cor}
Let $B=\mathbb K$. Then $i_M\circ j_M$ is the identity map on $KM(A,\mathbb K)$, hence $KM(A,\mathbb K)\cong KK(A,\mathbb K)$.

\end{cor}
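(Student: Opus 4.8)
The plan is to deduce the Corollary from the surjectivity of $i_M\colon KK(A,\mathbb{K})\to KM(A,\mathbb{K})$. By Lemma \ref{mono} we already know $j_M\circ i_M=\id$ on $KK(A,\mathbb{K})$, so once $i_M$ is seen to be onto, for any $x\in KM(A,\mathbb{K})$ we may write $x=i_M(y)$ and obtain $i_M\circ j_M(x)=i_M\bigl(j_M(i_M(y))\bigr)=i_M(y)=x$; thus $i_M$ and $j_M$ become mutually inverse isomorphisms, which yields both $KM(A,\mathbb{K})\cong KK(A,\mathbb{K})$ and $i_M\circ j_M=\id$.

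To prove that $i_M$ is surjective I would revisit the proof of Lemma \ref{epi}. The only step there that uses strictness of the generalized $KK$-cycle $(\varphi_+,\varphi_-)$ is the appeal to Lemma \ref{technical}, which requires the $*$-homomorphism $\varphi_+\ast\varphi_-\colon C\ast_J C\to M(B)$ to be quasi-unital on $J$. When $B=\mathbb{K}$ this is automatic: $M(\mathbb{K})$ is the algebra of all bounded operators on a separable Hilbert space $H$, and \emph{every} $*$-homomorphism $\pi$ from a separable $C^*$-algebra $J$ into $M(\mathbb{K})$ is quasi-unital. Indeed, choosing an increasing approximate unit $(u_\lambda)$ of $J$, the net $\pi(u_\lambda)$ is an increasing net of positive contractions whose strong limit is the orthogonal projection $e\in M(\mathbb{K})$ onto the closed subspace $\overline{\pi(J)H}$, and for each $k\in\mathbb{K}$ one has $\pi(u_\lambda)k\to ek$ in norm, because a strongly convergent bounded net, multiplied on the right by a fixed compact operator, converges in norm; as $\pi(J)\mathbb{K}\subseteq e\mathbb{K}$ is clear, this gives $\overline{\pi(J)\mathbb{K}}=e\mathbb{K}$. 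Hence Lemma \ref{technical} applies to $D=C\ast_J C$ (separable, being a quotient of the free product of two separable $C^*$-algebras) and to $\pi=\varphi_+\ast\varphi_-$, producing the projection $e$ together with the relations $(\ref{x0})$, $(\ref{x1})$, $(\ref{x2})$ for an \emph{arbitrary}, not necessarily strict, generalized $KK$-cycle.

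With $e$ at hand, the rest of the proof of Lemma \ref{epi} carries over essentially word for word, the only change being that the appeal to Lemma \ref{strict1} is now unnecessary: it served there only to keep the resulting homotopy strict. For the present statement a non-strict homotopy suffices, and the maps $\psi_{\pm,\cdot}\colon C\to M\bigl(IM_2(\mathbb{K})\bigr)$ are still well-defined $*$-homomorphisms --- this uses only that $e$ commutes with $\varphi_\pm(C)$ (relation $(\ref{x3})$) together with properties (u1)--(u3) of the isometries $V_t$, which make $t\mapsto\psi_{\pm,t}(c)$ strictly continuous on $[0,1]$. Relations $(\ref{x0})$ and $(\ref{x4})$ then show, exactly as in the original proof, that $(\psi_{+,\cdot},\psi_{-,\cdot})$ is a generalized $KK$-cycle from $A$ to $IM_2(\mathbb{K})$ joining $(0\oplus\varphi_+,0\oplus\varphi_-)$ to $(\mu_+\circ p,\mu_-\circ p)$ for a suitable $KK$-cycle $(\mu_+,\mu_-)$ from $A$ to $M_2(\mathbb{K})\cong\mathbb{K}$. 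Since $(0\oplus\varphi_+,0\oplus\varphi_-)$ represents the same class as $(\varphi_+,\varphi_-)$ in $KM(A,\mathbb{K})$ (it differs from it only by the zero cycle, under the identification $M_2(\mathbb{K})\cong\mathbb{K}$), we obtain $[(\varphi_+,\varphi_-)]=i_M\bigl([(\mu_+,\mu_-)]\bigr)$, so $i_M$ is onto, and the Corollary follows from the first paragraph.

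The step I expect to be the main obstacle is the bookkeeping in the last two paragraphs: verifying cleanly that every $*$-homomorphism into $M(\mathbb{K})$ is quasi-unital (the strong-limit-of-the-approximate-unit computation), and checking that strictness is used in the proof of Lemma \ref{epi} nowhere except through Lemma \ref{technical} and the now-removable Lemma \ref{strict1} --- in particular that the homotopy $(\psi_{\pm,\cdot})$ really takes values in $M(IM_2(\mathbb{K}))$ without that lemma.
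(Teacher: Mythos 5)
Your proposal is correct and rests on exactly the same key fact as the paper: for $B=\mathbb{K}$ every $*$-homomorphism of a separable $C^*$-algebra into $M(\mathbb{K})$ is quasi-unital (the paper cites Magajna's complementability of closed submodules of $\mathbb{K}$ where you give the direct approximate-unit/compactness argument). The paper's actual proof is a one-liner that skips your re-run of Lemma \ref{epi}: by Thomsen's equivalence, quasi-unital is the same as strictly continuous, so every generalized $KK$-cycle into $M(\mathbb{K})$ is automatically strict, whence $KM(A,\mathbb{K})=KS(A,\mathbb{K})$ and the Corollary follows immediately from the already-established statement for $KS$.
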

\begin{proof}
Any Hilbert $C^*$-submodule over $\mathbb K$ is complementable \cite{Magajna}, hence any $*$-homomorphism to $M(\mathbb K)$ is quasi-unital, hence continuous with respect to the strict topologies.

\end{proof}

\section{Pseudohomomorphisms as maps}

Let $(\mu_+,\mu_-)$ be a pair of maps $\mu_\pm:A\to E$, not necessarily additive or multiplicative, but homogeneous and involutive. We denote by $F=F(\mu_+,\mu_-)=C^*(\mu_+(A),\mu_-(A))\subset E$ the $C^*$-algebra generated by all $\mu_\pm(a)$, $a\in A$. Let $I=I(\mu_+,\mu_-)\subset F$ be the ideal, in $F$, generated by $\mu_+(a)-\mu_-(a)$, $a\in A$, and let $J=J(\mu_+,\mu_-)\subset F$ be the ideal, in $F$, generated by $M_\pm(a,b)$, $a,b\in A$, where $M_\pm(a,b)$ is either $\mu_\pm(a+b)-\mu_\pm(a)-\mu_\pm(b)$ or $\mu_\pm(ab)-\mu_\pm(a)\mu_\pm(b)$.

\begin{defn}\label{Def1}
A pair $(\mu_+,\mu_-)$ of continuous homogeneous involutive maps $\mu_\pm:A\to E$ {\it has the same deficiency} from being  a $*$-homomorphism if 
\begin{equation}\label{a2}
I\cap J=0. 
\end{equation}

\end{defn}

It is easy to see that this is equivalent (\cite{ME}) to 
$$
M_+(a,b)=M_-(a,b)\quad\mbox{and}\quad
M_+(a,b)\mu_+(c)=M_-(a,b)\mu_-(c)
$$
for any $a,b,c\in A$.


\begin{defn}
Let $p:C\to A$ be a surjection. 
A pair $(\varphi_+,\varphi_-)$ of $*$-homomorphisms $\varphi_\pm:C\to E$ is a {\it pseudohomomorphism} from $A$ to $E$ if $\varphi_+|_J=\varphi_-|_J$, where $J=\Ker p$.

\end{defn}

Let $C\in\mathcal E_A$, $(\psi_+,\psi_-)$ a pseudohomomorphism from $A$ to $B$. 
Let $s:A\to C$ be a $\mathbb C$-homogeneous $*$-respecting continuous map such that $p(s(a))=a$ for any $a\in A$, which exists by \cite{Bartle-Graves}. Set $\mu_\pm=\psi_\pm\circ s$. 
\begin{lem}
The pair $(\mu_+,\mu_-)$ has the same deficiency from being a $*$-homomorphism.

\end{lem}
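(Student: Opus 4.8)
The plan is to unwind the definitions and reduce the claim to the characterization of ``having the same deficiency'' recorded after Definition~\ref{Def1}, namely that $M_+(a,b)=M_-(a,b)$ and $M_+(a,b)\mu_+(c)=M_-(a,b)\mu_-(c)$ for all $a,b,c\in A$. So first I would write out what $M_\pm$ is for the maps $\mu_\pm=\psi_\pm\circ s$. For $a,b\in A$ pick $s(a),s(b)\in C$; since $p$ is a $*$-homomorphism, $p(s(a)s(b))=ab=p(s(ab))$ and $p(s(a)+s(b))=a+b=p(s(a+b))$, so the elements
$$
c_{\mathrm{add}}(a,b)=s(a+b)-s(a)-s(b),\qquad c_{\mathrm{mult}}(a,b)=s(ab)-s(a)s(b)
$$
both lie in $J=\Ker p\subset C$. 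Now apply $\psi_\pm$. Because $\psi_\pm$ are genuine $*$-homomorphisms on $C$, they send $s(a)s(b)$ to $\mu_+(a)\mu_+(b)$ resp. $\mu_-(a)\mu_-(b)$ and are additive, so a direct computation gives $M_\pm(a,b)=\psi_\pm\bigl(c_{\mathrm{add}}(a,b)\bigr)$ in the additive case and $M_\pm(a,b)=\psi_\pm\bigl(c_{\mathrm{mult}}(a,b)\bigr)$ in the multiplicative case. The key point is that in each case $M_\pm(a,b)=\psi_\pm(j)$ for one and the same $j\in J$ depending only on $a,b$ (not on the sign).

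With this identification the first required identity is immediate: since $(\psi_+,\psi_-)$ is a pseudohomomorphism, $\psi_+|_J=\psi_-|_J$, hence $M_+(a,b)=\psi_+(j)=\psi_-(j)=M_-(a,b)$. For the second identity, fix also $c\in A$ and the corresponding $j\in J$ with $M_\pm(a,b)=\psi_\pm(j)$. Since $J$ is an ideal in $C$ and $s(c)\in C$, we have $j\,s(c)\in J$, so again $\psi_+\bigl(j\,s(c)\bigr)=\psi_-\bigl(j\,s(c)\bigr)$. Expanding with multiplicativity of $\psi_\pm$ on $C$ yields
$$
M_+(a,b)\,\mu_+(c)=\psi_+(j)\,\psi_+(s(c))=\psi_+\bigl(j\,s(c)\bigr)=\psi_-\bigl(j\,s(c)\bigr)=\psi_-(j)\,\psi_-(s(c))=M_-(a,b)\,\mu_-(c),
$$
which is exactly the second condition. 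Hence $(\mu_+,\mu_-)$ has the same deficiency from being a $*$-homomorphism.

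I do not expect a serious obstacle here; the proof is essentially bookkeeping. The one thing to be careful about is the hypotheses on $s$: it must be $\mathbb{C}$-homogeneous and $*$-respecting so that $\mu_\pm=\psi_\pm\circ s$ are homogeneous and involutive (as required in Definition~\ref{Def1}), and continuous so that $\mu_\pm$ are continuous; these are exactly the properties guaranteed by the Bartle--Graves selection cited in the text, so I would just invoke them. A minor subtlety is that $M_\pm(a,b)$ is \emph{defined} to be ``either'' the additive or the multiplicative defect, so the argument should be run uniformly: in both cases the defect equals $\psi_\pm$ of a fixed element of $J$, and the rest of the argument is identical. Finally, one should note that the characterization quoted after Definition~\ref{Def1} (from \cite{ME}) is being used as a black box to pass from the two displayed identities back to the condition $I\cap J=0$; this is legitimate since that equivalence is stated in the excerpt.
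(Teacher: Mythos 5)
Your proposal is correct and follows essentially the same route as the paper: both reduce the claim (via the equivalence quoted after Definition~\ref{Def1}) to showing that the defect elements are $\psi_\pm$ of a single element of $J=\Ker p$, and then use $\psi_+|_J=\psi_-|_J$ together with the fact that $J$ is an ideal. The paper only writes out the multiplicative defect and the identity involving $\mu_\pm(c)$, leaving the additive case and the equality $M_+=M_-$ implicit, whereas you spell out all cases; the substance is identical.
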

\begin{proof}
Let us check (\ref{a2}). Note that 
\begin{eqnarray*}
(\mu_\pm(ab)-\mu_\pm(a)\mu_\pm(b))\mu_\pm(c)
&=&(\psi_\pm(s(ab))-\psi_\pm(s(a))\psi_\pm(s(b)))\psi_\pm(s(c))\\
&=&(\psi_\pm(s(ab)-s(a)s(b)))\psi_\pm(s(c))\\
&=& \psi_\pm((s(ab)-s(a)s(b))s(c)),
\end{eqnarray*}
and that $s(ab)-s(a)s(b)\in J$, so $(s(ab)-s(a)s(b))s(c)\in J$, and as $\psi_+$ and $\psi_-$ agree on $J$, so we are done. 

\end{proof}

Note that this construction depends on a choice of the map $s$, but as any two different maps $s,s':A\to C$ satisfying the above assumptions can be connected by the linear homotopy, so the resulting pairs of maps are homotopic in any reasonable sense.

Now, let $(\mu_+,\mu_-)$ be a pair of maps, $\mu_\pm:A\to E$, with the same deficiency from being a $*$-homomorphism. Let $q:F\to F/J$ and $r:F\to F/I$ be the quotient maps. Obviously, $q\circ\mu_\pm$ are $*$-homomorphisms from $A$ to $F/J$.  

Set
$$
C_\mu=\{(a,f_+,f_-):a\in A,f_+,f_-\in F, \mu_\pm(a)=q(f_\pm),r(f_+)=r(f_-)\}.
$$
This is a $C^*$-algebra that surjects onto $A$, $p(a,f_+,f_-)=a$. It has also two surjections, $p_+(a,f_+,f_-)=f_+$, $p_-(a,f_+,f_-)=f_-$, onto $F$. 

Note that 
$$
J_\mu=\Ker p=\{(0,f_+,f_-):f_\pm\in J,f_+-f_-\in I\}=\{(0,j,j):j\in J\},
$$ 
as $I\cap J=0$.
Then $p_+|_{I_\mu}=p_-|_{I_\mu}$. Let $\iota:F\to E$ denote the inclusion, and set $\varphi_\pm=\iota\circ p_\pm:C_\mu\to E$. Then the pair $(\varphi_+,\varphi_-)$ is a pseudohomomorphism from $A$ to $E$.

\end{document}